\theoremstyle{plain}
\newtheorem{lemma}{Lemma}[section]
\newtheorem{proposition}[lemma]{Proposition}
\newtheorem{corollary}[lemma]{Corollary}
\newtheorem*{subclaim*}{Subclaim}
\newtheorem*{mainthm*}{\normalfont\scshape Main Theorem}
\theoremstyle{remark}
\newtheorem*{remark*}{Remark}
\newtheorem*{notation*}{Notation}
\newtheorem*{question*}{Question}
\theoremstyle{definition}
\newtheorem{definition}[lemma]{Definition}
\newtheorem*{definition*}{Definition}
\newtheorem*{remarks*}{Remarks}
\newtheorem*{example*}{Example}
\newtheorem*{examples*}{Examples}
\newtheorem*{conjecture*}{Conjecture}
\newtheorem*{conjectures*}{Conjectures}
\newcommand\blfootnote[1]{%
  \begingroup
  \renewcommand\thefootnote{}\footnote{#1}%
  \addtocounter{footnote}{-1}%
  \endgroup
}
\def\N{\mathbb{N}}
\def\Z{\mathbb{Z}}
\def\F{\mathbb{F}}
\def\CC{\mathcal{C}}
\def\I{\textbf{I}}
\def\FL{\textbf{F}}
\def\G{\textbf{G}}
\def\W{\mathbf{W}}
\def\C{\textbf{C}}
\def\X{\textbf{X}}
\def\Y{\textbf{Y}}
\def\ZZ{\mathbf{Z}}
\def\0{\mathbf{0}}
\def\m{\mathfrak{m}}
\def\p{\mathfrak{p}}
\def\q{\mathfrak{q}}
\def\n{\mathfrak{n}}
\DeclareMathOperator{\charac}{char}
\DeclareMathOperator{\GL}{GL}
\DeclareMathOperator{\Kdim}{dim_{Krull}}
\DeclareMathOperator{\ima}{im}
\DeclareMathOperator{\Id}{Id}
\newtheorem*{newconj*}{\normalfont\scshape Strong Conciseness Conjecture}
\newtheorem*{newcon*}{\normalfont\scshape  Profinite Conciseness Conjecture}
\title{Conciseness in compact $R$-analytic groups}
\author{Andoni Zozaya}
\date{}
\begin{document}

\maketitle

\begin{abstract}
\blfootnote{
The author is supported by the Basque Government's project IT483-22, the Spanish Government's project PID2020-117281GB-I00, partly with ERDF funds, and the Spanish Ministry of Science, Innovation and Universities' grant FPU17/04822. \newline
\textbf{Mathematical Subject Classification (2020)}:  20E18, 20F10, 22E20 \newline
 \textbf{Keywords:} profinite groups, $R$-analytic groups, group word, conciseness, strong conciseness}
 We prove that every word is concise in the class of compact $R$-analytic groups. That is, for every word $w$ and every compact $R$-analytic group $G$ such that the set of word values $w\{G\}$ is finite, the verbal subgroup $w(G)$ is also finite. 
\end{abstract}

\section{Introduction}
\label{Section 1}

A \emph{word} in $k$ variables $x_1, \dots, x_k$ is an element $w(x_1,\dots, x_k)$ of the free group $F(x_1, \dots, x_k);$ and, given any group $G,$ it inherently defines a map $w \colon G^{(k)} \rightarrow G,$ where $w(g_1, \dots, g_k)$ is the element of $G$ that we obtain by substituting each variable $x_i$ with $g_i.$ The image of that map, which will be denoted by $w\{G\},$ is the set of \emph{word values} of $w,$ and it is typically not a subgroup of $G.$ However, we can naturally define the \emph{verbal subgroup} of $w$ as $w(G) = \langle w\{G\} \rangle.$ 

A word is said to be \emph{concise} in a class of groups $\CC$, if for all $G \in \CC$ such that $w\{G\}$ is finite, $w(G)$ is also finite. More generally, $w$ is simply called \emph{concise} if it is concise in the class of all groups. P. Hall initially conjectured that all words were concise, but Ivanov \cite{Iva} refuted it almost three decades later. Nevertheless, as Jaikin-Zapirain highlighted in \cite{verbal}, P. Hall's conjecture is still open for profinite groups:

\begin{newcon*}
Every word is concise in the class of profinite groups.
\end{newcon*}

There are several articles analysing the conciseness of certain words, but we know few classes of groups where all the words are concise. Indeed, it has been known since the 1960s that every word is concise in the class of linear groups or the class of virtually abelian-by-nilpotent groups (cf. respectively \cite{Mer} and \cite{TS}). This work provides another class of groups where every word is concise, namely compact $R$-analytic groups. In other words, we have the following:
 
\begin{mainthm*}
Every word is concise in the class of compact $R$-analytic groups.
\end{mainthm*}

\emph{$R$-analytic groups} are topological groups endowed with a manifold structure over a pro-$p$ domain $R$ such that both structures are compatible; they are defined and thoroughly studied in \cite[Chapter 13]{DDMS}, we also refer to \cite{JaKl} and \cite{Serre}. Those groups arise as a generalisation of $p$-adic analytic groups. Furthermore, every compact $p$-adic analytic groups is linear (cf. \cite[Theorem 7.19]{DDMS} and \cite[Corollary 8.33]{DDMS}), so conciseness in those groups follows from Merzljakov's result \cite{Mer}. Every word being concise in the class of compact $p$-adic analytic groups is a decisive step in Jaikin-Zapirain's study of the verbal width of finitely generated pro-$p$ groups (cf. \cite{verbal}). \\

In \cite{DMS1} and \cite{DKS}, the notion of conciseness is generalised in profinite groups. More specifically, the word $w$ is said to be \emph{strongly concise} in a class of profinite groups, if for $G \in \CC$ such that $|w\{G\}| < 2^{\aleph_0} ,$  then  $w(G)$ is finite. In \cite{DKS}, the authors pose the following conjecture:

\begin{newconj*}
Every word is strongly concise in the class of profinite groups.
\end{newconj*}

However, in the class of compact $R$-analytic groups strong conciseness is equivalent to conciseness. Indeed, in Proposition \ref{countable finite}, we show that whenever $|w\{ G\}| < 2^{\aleph_0}$ in a compact $R$-analytic group, then $w\{G\}$ is finite. Therefore, the main result can be restated as follows:
\begin{corollary}
\label{aieru1}
Every word is strongly concise in the class of compact $R$-analytic groups.
 \end{corollary}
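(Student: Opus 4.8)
The plan is to derive the corollary directly from the Main Theorem together with the preceding proposition, so that essentially nothing new is needed. Fix a compact $R$-analytic group $G$ and a word $w = w(x_1, \dots, x_k)$ with $|w\{G\}| < 2^{\aleph_0}$; we must show that $w(G)$ is finite. By the Main Theorem, $w$ is concise in $G$, so it suffices to prove that $w\{G\}$ is actually \emph{finite}, not merely of cardinality less than $2^{\aleph_0}$.

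For this I would invoke the proposition with $M = G^{(k)}$, $N = G$ and $F$ the word map $G^{(k)} \rightarrow G$. The only point requiring a (routine) check is that this setup is legitimate: the Cartesian power $G^{(k)}$ carries a product compact $R$-analytic manifold structure, and the word map is $R$-analytic because it is a finite composition of the group multiplication $G \times G \rightarrow G$, the inversion $G \rightarrow G$, and coordinate projections, each of which is $R$-analytic by the very definition of an $R$-analytic group (see \cite[Chapter 13]{DDMS}); compositions and products of $R$-analytic maps between $R$-analytic manifolds are again $R$-analytic. Since $\ima{F} = w\{G\}$ has cardinality $< 2^{\aleph_0}$, the proposition gives that $w\{G\}$ is finite, and then conciseness yields $|w(G)| < \infty$. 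As $G$ was an arbitrary compact $R$-analytic group, $w$ is strongly concise in that class.

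I do not expect any genuine obstacle in this last step: all the mathematical content sits in the Main Theorem (the conciseness statement) and in the topological rigidity packaged by the proposition, both of which are available. The only care needed is bookkeeping — confirming that the word map really is an $R$-analytic map between compact $R$-analytic manifolds, so that the proposition applies verbatim.
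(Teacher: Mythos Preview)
Your proposal is correct and matches the paper's approach exactly: the paper presents Corollary~\ref{aieru1} as an immediate restatement of the Main Theorem once Proposition~2.1 has shown that $|w\{G\}|<2^{\aleph_0}$ forces $w\{G\}$ finite for the $R$-analytic word map. Your added verification that the word map is $R$-analytic on the compact manifold $G^{(k)}$ is the only bookkeeping needed, and it is routine as you say.
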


\textit{Notation and conventions}. Most of the notation is standard, except for $X^{(n)}$, which denotes the $n$th Cartesian power of $X.$ Besides, $p$ is always a prime number, $\F_p$ is the field of $p$ elements, $\Z_p$ is the ring of $p$-adic integers, $Q[[X_1, \dots, X_n]]$ is the ring of power series in $n$ variables with coefficients in the ring $Q,$ and $\Kdim(Q)$ and $\charac{Q}$ denote, respectively, the Krull dimension and the characteristic of $Q.$  Finally,  in a topological space $X$, by $S \subseteq_o X$  we mean that $S$ is an open subset, and $\overline{S}$ denotes the closure of $S$ in $X$.

Moreover, we recall that a \emph{pro-$p$ domain} is a Noetherian local complete integral domain, whose residue field is finite of characteristic $p$. Throughout this paper $R$ will usually be a pro-$p$ domain with maximal ideal $\m,$ which sometimes will be denoted as $(R,\m)$ for clarity; additionally, this notation might be used for local rings. According to Cohen's Structure Theorem \cite{Cohen}, any pro-$p$ domain $R$ is a finite integral extension of $Q[[t_1, \dots, t_m]],$ where $m = \Kdim(R)-1,$ and $Q = \Z_p$ when $\charac{R} =0$ and $Q = \F_p[[t]]$ when $\charac{R}=p.$\\
\linebreak
\textit{Funding:} This work was supported by the Spanish Government [project number PID2020-117281GB-I00], partly with E\-R\-D\-F funds; the Basque Government [project number IT483-22] and the Spanish Ministry of Science, Innovation and Universities [grant number FPU17/04822].

\section{\texorpdfstring{$R$}{R}-standard groups}

This section is devoted to presenting and studying an essential family of $R$-analytic groups, namely $R$-standard groups. Nonetheless, for the convenience of the reader, we will start by introducing some basic definitions on the theory of $R$-analytic groups:

\begin{definition}
\label{def: analytic}
\begin{itemize}
\item[(i)] (\textup{cf. \cite[Definition 13.1]{DDMS}}). A map $f \colon U \subseteq_o R^{(n)} \rightarrow R^{(m)}$ is \emph{analytic} if for each $x \in U$ there exists a positive integer $N \in \N$ such that $x + \left( \m^N\right)^{(n)} \subseteq U$ and a tuple of power series $\FL \in R[[X_1, \dots, X_n]]^{(m)}$ such that $f(y)= \FL( x + y)$ for all $y \in \left( \m^N\right)^{(n)}.$ 
\item[(ii)] (\textup{cf. \cite[Definition 13.5]{DDMS}}). An \emph{$R$-analytic manifold} is a topological space $M$ such that for each $x \in M$ there exists an \emph{$R$-chart} $(U, \phi, n)$ of $x,$ i.e. $x \in U \subseteq_o M,$ $\phi \colon U \rightarrow \phi(U) \subseteq R^{(n)}$ is a homeomophism where $\phi(U)$ is endowed with the subspace topology and $n \in \N_0$ is the \emph{dimension}  of the chart. Moreover, all the $R$-charts must be compatible, that is, whenever $(U, \phi, n)$ and $(V, \psi, m)$ are two $R$-charts such that $U \cap V \neq \varnothing$ then $\phi \circ \psi^{-1} |_{\psi(U \cap V)} \colon \psi(U \cap V) \rightarrow R^{(n)}$ is $R$-analytic. 
\item[(iii)] (cf. \cite[Definition 13.8]{DDMS}). An \emph{$R$-analytic group} is a topological group $G$ that is an $R$-analytic manifold in such a way that the multiplication map $m \colon G \times G \rightarrow G,$ $(g,h) \mapsto g\cdot h$ and the inversion map $\iota \colon G \rightarrow G,$ $g \mapsto g^{-1}$ are analytic maps: for each $(g, h) \in G \times G$ there exist $R$-charts $(U_1, \phi_1, n_1)$ of $g$, $(U_2, \phi_2, n_2)$ of $h$ and $(V, \psi, m)$ of $gh$ such that 
$m^{-1}(V)$ is open in $G \times G$ and 
$$\psi \circ m \circ (\phi_1, \phi_2)^{-1} |_{(\phi_1, \phi_2)(U_1 \times U_2 \cap m^{-1}(V))}$$ is analytic; and the same for the inversion. 
 \end{itemize}
\end{definition}

\subsection{\texorpdfstring{$R$}{R}-standard groups and formulae}

An \emph{$R$-standard group} of level $N$ and dimension $d$ is an $R$-analytic group $S$ with a global atlas $\left\{(S, \phi)\right\}$ such that
\begin{enumerate}
\item[(i)] $\phi \colon S \rightarrow \left(  \m^N\right)^{(d)}$ is a homeomorphism,
\item[(ii)] $\phi(1) = \textbf{0}$ and 
\item[(iii)] for all $j \in \{ 1, \dots, d \}$ there exists a power series $F_j \in R[[X_1, \dots, X_{2d}]],$ with constant term equal to zero, such that  
$$\phi(xy) = \left(F_1(\phi(x), \phi(y)), \dots, F_d(\phi(x), \phi(y)) \right) \ \forall x, y \in S.$$
\end{enumerate}
Sometimes we will denote an $R$-standard group by $(S, \phi)$ to emphasise the r\^{o}le of the homeomorphism.
Any tuple of power series $\FL = (F_1, \dots, F_d)$ satisfying condition (iii) of the above definition is a $d$-dimensional \emph{formal group law}, and there exists a \emph{formal inverse} of it, i.e., a tuple of power series $\I = (I_1, \dots, I_d) \in R[[X_1, \dots, X_d]]^{(d)}$ such that
$$\FL(\I(\X), \X)  = \FL(\X, \I(\X)) = \0,$$
where $\X$ is a $d$-tuple of indeterminates (cf. \cite[Proposition 13.16(ii)]{DDMS}). 

\begin{lemma}
\label{normstd}
A compact $R$-analytic group contains an open normal $R$-standard subgroup.
\end{lemma}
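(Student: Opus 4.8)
The plan is to take an arbitrary open $R$-standard subgroup of $G$ and then pass to one of its congruence subgroups, deep enough to be normal in $G$. That $G$ has an open $R$-standard subgroup $H$ may be quoted from \cite[Chapter~13]{DDMS}, or seen directly: in an $R$-chart $\phi\colon U\to(\m^L)^{(d)}$ of $G$ at $1$ with $\phi(1)=\0$, the ($R$-analytic) multiplication and inversion are, near $1$, represented by tuples of power series over $R$ with zero constant term; since a power series over $R$ with zero constant term sends any argument with all entries in $\m^m$ into $\m^m$, for $N$ large the set $H:=\phi^{-1}\big((\m^N)^{(d)}\big)$ is closed under both operations and is therefore an open subgroup of $G$, visibly $R$-standard of level $N$ and dimension $d$. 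For $n\ge0$ put $H_n:=\phi^{-1}\big((\m^{N+n})^{(d)}\big)$: the same remark makes each $H_n$ an open $R$-standard subgroup of $H$ (of level $N+n$, with the same formal group law), and since $\bigcap_n\m^{N+n}=0$ by the Krull intersection theorem the $H_n$ form a base of neighbourhoods of $1$ in $G$.

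Next I would check that each $H_n$ is normal in $H$. For $h\in H$ the conjugation $c_h$ is, in the chart $\phi$ and on all of $H$, the substitution $\X\mapsto\FL\big(\FL(\phi(h),\X),\I(\phi(h))\big)$ --- a tuple of power series over $R$ whose constant term is $\0$, by the unit and formal-inverse identities satisfied by $\FL$. Such a substitution preserves every $(\m^m)^{(d)}$, so $c_h(H_n)\subseteq H_n$, and applying this to $c_{h^{-1}}$ as well gives $c_h(H_n)=H_n$.

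Finally I would arrange normality in $G$. As $G$ is compact and $H$ is open, $[G:H]<\infty$; fix a transversal $t_1,\dots,t_r$ of $H$ in $G$. Each $c_{t_i}$ is $R$-analytic and fixes $1$, so in the chart $\phi$ it is represented, near $1$, by a tuple of power series over $R$ with zero constant term, valid on $\phi^{-1}\big((\m^{L_i})^{(d)}\big)$ for some $L_i\in\N$. Choosing $n$ with $N+n\ge\max_i L_i$, the same filtration-preservation yields $c_{t_i}(H_n)=H_n$ for every $i$; and since any $g\in G$ can be written $g=t_ih$ with $h\in H$, we obtain $c_g(H_n)=c_{t_i}\big(c_h(H_n)\big)=c_{t_i}(H_n)=H_n$. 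Thus $H_n$ is an open normal $R$-standard subgroup of $G$.

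I expect the only genuinely substantive step --- the rest being bookkeeping over the finitely many cosets $t_iH$ --- to be the repeated assertion that these conjugation maps, written in the fixed $R$-chart, are given by power series \emph{with coefficients in $R$} (and necessarily zero constant term), since that is exactly what forces them to respect the $\m$-adic filtration and hence to fix the deep congruence subgroups $H_n$. For $c_h$ this is explicit through the formal group law; for the $c_{t_i}$ with $t_i\notin H$ it rests on the local representability of $R$-analytic maps by power series over $R$ that is part of the definition of $R$-analyticity.
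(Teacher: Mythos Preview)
Your proof is correct and follows the same route as the paper: start from an open $R$-standard subgroup (the paper simply cites \cite[Theorem~13.20]{DDMS}), pass to a deep enough congruence level so that conjugation by each element of a finite transversal is given by an $R$-power series with zero constant term, and conclude normality because such series preserve every $(\m^m)^{(d)}$. Your version is slightly more detailed---you construct $H$ explicitly and verify normality in $H$ via the formal group law, whereas the paper leaves the latter implicit---but the argument is structurally identical.
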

\begin{proof}
Let $G$ be a compact $R$-analytic group of dimension $d$. By  \cite[Theorem 13.20]{DDMS}, there exists a finite index $R$-standard subgroup $(H, \phi)$ of level $N,$ with formal group law $\FL$ and formal inverse $\I.$ Let $T$ be a left transversal for $H$ in $G.$ Since conjugating by a fixed element is $R$-analytic (see Definition \ref{def: analytic}), for each $t \in T$ there exists an $N_t \geq N$ and some power series $C_j^t \in R[[X_1, \dots, X_d]],$ $j \in \{ 1, \dots, d\}$,  such that for all $x \in \phi^{-1}\left( \left( \m^{N_t} \right)^{(d)} \right),$
$$\phi\left(x^t\right) = \left(C_1^t(\phi(x)), \dots, C_d^{t}(\phi(x)) \right). $$
Let $L= \max_{t \in T}{N_t}$, since  $\left( \m^{L} \right)^{(d)}$ is closed with respect to the power series $F_j$ and $I_j,$ then $S = \phi^{-1}\left(\left( \m^L \right)^{(d)} \right) $ is an open $R$-standard subgroup. Moreover, $C_j^t(\0) = 0,$ so $C_j^t$ has constant term equal to zero, and thus, $S$ is closed with respect to conjugation by every $t \in T.$ Finally, since every element $g \in G$ is of the form $th$ where $t \in T$ and $h \in H,$ then $S$ is closed under the conjugation with $g.$ \end{proof} 

Furthermore, a close scrutiny of the proof reveals that for the open normal $R$-standard subgroup $(S, \phi)$ constructed in the proof, for each $g \in G$ the conjugation map $c_g \colon S \rightarrow S,$ where $c_g(x) = x^g,$  is given globally by a single tuple of power series. That is, for each $g \in G$ there exists a tuple of power series $\C_g(\X) = \left(C_{1}^{g}(\X), \dots, C_{d}^{g}(\X) \right) \in R[[X_1, \dots, X_d]]^{(d)}$ such that 
\begin{equation}
\label{conj}
\phi\left(x^g\right)  = \C_g\left(\phi(x)\right) \ \forall x \in S.
\end{equation}

Henceforward, $(S, \phi)$ will usually be an open normal $R$-standard subgroup of $G,$ of level $N$ and dimension $d,$ with formal group law $\FL$ and formal inverse $\I,$ such that the conjugation by $g \in G$ is given as in (\ref{conj}). 

Moreover, $(S, \phi)$ defines a natural atlas of $G$ given by $\{ (xS, \phi_x) \}_{x \in G},$ where $\phi_x \colon xS \rightarrow \left(  \m^N\right)^{(d)}$ is defined by $\phi_x(y) = \phi(x^{-1}y),$ and this atlas is compatible with the initial $R$-analytic structure of $G.$ We can get an explicit description of the group operations in $G$ using that atlas: 

\begin{lemma}
\label{eragiketak}
Let $t, r \in G.$ Then,
\begin{enumerate}
\item[(i)] the inverse in $tS$ is given in coordinates by the tuple of power series  $ \C_{t^{-1}} \circ \I.$  That is,
 $$\phi_{t^{-1}}\left(x^{-1} \right) = \left(\C_{t^{-1}} \circ \I \right)(\phi_t(x)) \ \forall x \in tS.$$
\item[(ii)] the multiplication in $tS \times rS$ is given in coordinates by the tuple of power series $ \FL(\C_r(\X), \Y).$ That is, 
$$\phi_{tr}(x y) =  \FL(\C_r(\phi_t(x)), \phi_r(y)) \ \forall x \in tS, y \in rS.$$
\end{enumerate}   
\end{lemma}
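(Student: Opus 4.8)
The plan is to reduce both parts to computations inside the $R$-standard subgroup $S$ — where multiplication is governed by $\FL$, inversion by $\I$, and conjugation by any $g \in G$ by $\C_g$ according to (\ref{conj}) — and to let the normality of $S$ absorb the coset representatives. The basic move is that every point of $tS$ can be written uniquely as $ts$ with $s \in S$, and then $\phi_t(ts) = \phi(t^{-1}ts) = \phi(s)$ straight from the definition $\phi_x(y) = \phi(x^{-1}y)$; after that one only has to rearrange products using $Sg = gS$ and feed the result into the three power series above.

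For part (ii) I would fix $x = ts_1 \in tS$ and $y = rs_2 \in rS$ with $s_1, s_2 \in S$. Then
$$xy = ts_1rs_2 = tr\,(r^{-1}s_1r)\,s_2 = tr\,\bigl(s_1^{r}s_2\bigr),$$
and $s_1^{r}s_2 \in S$ by normality, so $xy \in trS$ (this is why $\phi_{tr}$ is the relevant chart) and $\phi_{tr}(xy) = \phi(s_1^{r}s_2)$. Applying the formal group law and then (\ref{conj}),
$$\phi\bigl(s_1^{r}s_2\bigr) = \FL\bigl(\phi(s_1^{r}),\phi(s_2)\bigr) = \FL\bigl(\C_r(\phi(s_1)),\phi(s_2)\bigr),$$
and since $\phi(s_1) = \phi_t(x)$ and $\phi(s_2) = \phi_r(y)$ this is precisely $\FL(\C_r(\phi_t(x)),\phi_r(y))$.

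For part (i) I would write $x = ts \in tS$, so $\phi_t(x) = \phi(s)$, and compute $x^{-1} = s^{-1}t^{-1} = t^{-1}\bigl(ts^{-1}t^{-1}\bigr) = t^{-1}(s^{-1})^{t^{-1}} \in t^{-1}S$; hence $\phi_{t^{-1}}(x^{-1}) = \phi\bigl((s^{-1})^{t^{-1}}\bigr) = \C_{t^{-1}}(\phi(s^{-1}))$ by (\ref{conj}). It remains to note that $\phi(s^{-1}) = \I(\phi(s))$: applying $\phi^{-1}$ to the defining identity $\FL(\I(\phi(s)),\phi(s)) = \0 = \phi(1)$ shows that $\phi^{-1}(\I(\phi(s)))\cdot s = 1$, i.e.\ $\I(\phi(s)) = \phi(s^{-1})$. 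Therefore $\phi_{t^{-1}}(x^{-1}) = (\C_{t^{-1}}\circ\I)(\phi_t(x))$.

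I do not anticipate a genuine obstacle here; the lemma is essentially an exercise in unwinding the definitions of the charts $\phi_x$. The two points that deserve a word of justification are the bookkeeping — verifying that $xy$ lands in $trS$ and $x^{-1}$ in $t^{-1}S$, so that the correct charts appear on the left-hand sides — and the remark that the composite tuples $\FL(\C_r(\X),\Y)$ and $\C_{t^{-1}}\circ\I$ are bona fide power series: this holds because $\C_g$ and the components of $\FL$ and $\I$ all have zero constant term, and every argument substituted above has entries in $\bigl(\m^N\bigr)^{(d)}$, so the closure properties exploited in the proof of Lemma \ref{normstd} again apply.
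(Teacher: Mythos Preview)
Your proof is correct and follows exactly the same line as the paper's: write $x=t\overline{x}$, $y=r\overline{y}$, push the coset representatives past using normality to get $xy=tr\,\overline{x}^{\,r}\overline{y}$ and $x^{-1}=t^{-1}(\overline{x}^{-1})^{t^{-1}}$, then read off the coordinates via $\FL$, $\I$ and $\C_g$. The only difference is that you spell out the justifications for $\phi(s^{-1})=\I(\phi(s))$ and for the compositions being genuine power series, which the paper leaves implicit.
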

\begin{proof}
(i) Take $x = t\overline{x} \in tS,$ then
$$\C_{t^{-1}}(\I(\phi_t(x))) = \C_{t^{-1}}\left(\phi\left(\overline{x}^{-1}\right)\right) = \phi\left( \left(\overline{x}^{-1}\right)^{t^{-1}} \right)  = \phi_{t^{-1}}\left(x^{-1}\right).$$
(ii) Take $x = t\overline{x} \in tS$ and $y = r\overline{y} \in rS,$  then
\[ \phi_{tr}(x y) = \phi(\overline{x}^r \overline{y}) = \FL(\C_r(\phi(\overline{x})), \phi(\overline{y})) =  \FL(\C_r(\phi_t(x)), \phi_r(y)). \qedhere \]
\end{proof}

 For $x, y \in G$ such that $xS= yS,$ let $A_x^y \colon \left( \m^N \right)^{(d)} \rightarrow \left( \m^N \right)^{(d)}$ be the $R$-analytic homeomorphism $\phi_y \circ \phi_x^{-1}.$ Since $\phi_y = A_{x}^y \circ \phi_x,$ it follows from (i) of Lemma \ref{eragiketak} that
$$\phi_r\left(x^{-1}\right) = \left(A_{t^{-1}}^r \circ \C_{t^{-1}}  \circ \I \right) (\phi_t(x)) \ \forall x \in tS,$$
whenever $rS = t^{-1}S.$ And idem for the multiplication:
$$\phi_{p}(x y) =  A_{tr}^p \left( \FL(\C_r(\phi_t(x)), \phi_r(y))\right) \ \forall x \in tS, y \in rS,$$
whenever $trS =pS.$ Hence, we can fix a left transversal $T$ for $S$ in $G,$ and work simply with the atlas $\{ (tS, \phi_t) \}_{t \in T}.$

Consequently, the $R$-analytic map $w \colon G^{(k)} \rightarrow G$ is given by a single tuple of power series in the open subset $t_{1}S \times \dots \times t_{k} S$ ($t_i \in T$), i.e., there exists a tuple of power series $\W_{t_1, \dots, t_k} \in R[[X_1, \dots, X_{dk}]]^{(d)}$ such that 
\begin{equation}
\label{wordmap}
\phi_{p}(w(x_1, \dots, x_k)) = \W_{t_1, \dots, t_k} \left(\phi_{t_{1}}(x_1), \dots, \phi_{t_{k}}(x_k) \right) \ \forall x_j \in t_{j}S,
\end{equation}
where $p$ is the element of $T$ such that $w(t_1, \dots, t_k) p^{-1} \in S.$ Furthermore, $\W_{t_1, \dots, t_k}$ is a convenient composition of finitely many power series, scilicet $\FL,$ $\I,$ $\C_t$ ($t \in T \cup T^{-1}$) and $A_{x}^y$ ($x \in TT \cup T^{-1}$ and $y \in T$).

\subsection{Conciseness in \texorpdfstring{$R$}{R}-standard groups}

Since in $R$-standard groups the operations are globally described by power series, the study of the conciseness in those groups is mainly based on properties of $R$-analytic maps. 

\begin{proposition}
\label{std}
Let $S$ be an $R$-standard group and let $w$ be a word such that $w\{S\}$ is finite. Then, $w(S) = \{1\}.$ 
\end{proposition}
\begin{proof}
Firstly, $S$ can be identified with  $\left(\m^N \right)^{(d)},$ where $N$ is the level and $d$ the dimension of $S,$ such that the multiplication and the inversion are defined by two power series and the identity is $\0$, so the word map $w$ is a single power series $\mathbf{W} \in R[[X_1, \dots, X_{dk}]]^{(d)},$ where $k$ is the number of variables of the word. Moreover, since $w\{S\}$ is finite and the word map is continuous, $\W$ is locally constant, so by \cite[Lemma 9]{Ja}, $\mathbf{W}$ is constant, i.e, $\mathbf{W}(X_1, \dots, X_{dk})= \mathbf{W}(\0,\dots, \0) = \0,$ and thus $w\{S\} = \{\0\}.$
\end{proof}

We should take notice of a couple of consequences of the previous result. On the one hand, if $G$ is a compact $R$-analytic group such that $w\{G\}$ is finite, we can obtain the set of word values just by looking at a transversal of a convenient $R$-standard subgroup. 

\begin{corollary}
\label{marginal}
Let $G$ be a compact $R$-analytic group and let $S$ be an open normal $R$-standard subgroup constructed  as in Lemma \ref{normstd}. If $w\{G \}$ is finite, $S$ is marginal, i.e., 
$w(x_1, \dots, x_k) = w(y_1, \dots, y_k)$ for all $x_1, \dots, x_k, y_1, \dots, y_k \in G$ such that $x_i y_i^{-1} \in S.$ 
\end{corollary}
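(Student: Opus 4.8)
The plan is to reduce the statement to the assertion that, for each fixed $k$-tuple $(t_1,\dots,t_k)$ of elements of a left transversal $T$ for $S$ in $G$, the word map $w$ is constant on the open set $t_1S\times\dots\times t_kS$, and then to extract this constancy from the coordinate description \eqref{wordmap} together with the rigidity of $R$-analytic maps already exploited in Proposition~\ref{std}.

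For the reduction, suppose $x_1,\dots,x_k,y_1,\dots,y_k\in G$ satisfy $x_iy_i^{-1}\in S$ for every $i$. Since $S\trianglelefteq G$, the condition $x_iy_i^{-1}\in S$ is equivalent to $y_i^{-1}x_i\in S$, i.e.\ to $x_iS=y_iS$; so I would choose $t_i\in T$ with $x_i,y_i\in t_iS$, after which it suffices to prove $w$ is constant on $t_1S\times\dots\times t_kS$. Fix such a tuple. Because $S$ is normal the quotient $G/S$ is a finite group, and in it $w(x_1S,\dots,x_kS)=w(t_1S,\dots,t_kS)$ is one fixed coset $gS$ with $g\in T$; hence $w(x_1,\dots,x_k)\in gS$ for all $x_i\in t_iS$, and via the charts $\phi_{t_i}$ on the source and $\phi_g$ on the target, formula \eqref{wordmap} identifies $w|_{t_1S\times\dots\times t_kS}$ with the single power series map $\W_{t_1,\dots,t_k}\colon (\m^N)^{(dk)}\to(\m^N)^{(d)}$.

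It then remains to show that $\W_{t_1,\dots,t_k}$ is constant. Its image is contained in $\phi_g\bigl(w\{G\}\cap gS\bigr)$, which is finite by hypothesis, and $(\m^N)^{(dk)}$ is a profinite space; a continuous map from a profinite space to a finite discrete space has clopen fibres, so $\W_{t_1,\dots,t_k}$ is locally constant, and by \cite[Lemma 3.1]{Ja} a locally constant power series is constant — precisely the mechanism used in Proposition~\ref{std}. Consequently $\phi_g(w(x_1,\dots,x_k))$ does not depend on the chosen $x_i\in t_iS$, and applying $\phi_g^{-1}$ yields $w(x_1,\dots,x_k)=w(y_1,\dots,y_k)$, which is the claim. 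I do not expect a genuine obstacle here: the hypotheses are used only for the equivalence $x_iy_i^{-1}\in S\iff x_iS=y_iS$ (normality of $S$) and for the fact that on a product of $S$-cosets $w$ lands in a single $S$-coset and is there given by the one power series $\W_{t_1,\dots,t_k}$ — both already secured by the discussion leading to \eqref{wordmap}; the one point worth stating carefully is that the target chart $\phi_g$ is the same for every admissible input, which is exactly what \eqref{wordmap} asserts. After that, finiteness of $w\{G\}$ plus the rigidity of convergent power series close the argument.
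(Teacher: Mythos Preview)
Your proof is correct and follows essentially the same route as the paper's own argument: reduce to showing that $w$ is constant on each product of cosets $t_1S\times\dots\times t_kS$, invoke the single power-series description \eqref{wordmap}, and apply \cite[Lemma~3.1]{Ja} to conclude that a locally constant power series is constant. You have simply spelled out in more detail the reduction step and the reason the map is locally constant, but the method is the same.
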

\begin{proof}
We shall prove that $w$ is constant in each $t_{1}S \times \dots \times t_{k}S.$ Indeed, by (\ref{wordmap}):
$$\phi_{p}(w(x_1, \dots, x_k)) = \W_{t_1, \dots, t_k}(\phi_{t_{1}}(x_1), \dots, \phi_{t_{{k}}}(x_k)) \ \forall x_j \in t_{j}S.$$
But since $w$ is locally constant, by \cite[Lemma 9]{Ja}, $\W_{t_1, \dots, t_k}$ is constant, i.e.,
$$\W_{t_1, \dots, t_k}(X_1, \dots, X_{dk})  = \mathbf{c} \in R^{(d)},$$
so $\phi_{p}(w(x_1, \dots, x_k)) = \mathbf{c}$ for all $x_j \in t_{j}S.$
\end{proof}

On the other hand, note that any $R$-analytic group $G$ satisfies a weaker version of the conciseness conjecture, namely the existence --when the set of word values $w\{G\}$ is finite-- of an open subgroup where the word $w$ is a law. 

\begin{corollary}
\label{virtually law}
Let $G$ be an $R$-analytic group and let $w$ be a word such that $w\{G\}$ is finite. Then, there exists an open $R$-standard subgroup $S$ where $w$ is a law.
\end{corollary}
\begin{proof}
According to Lemma \ref{normstd}, there exists an open $R$-standard subgroup $S$ of $G.$ Since $|w\{S\}| \leq |w\{G\}|$, by Proposition \ref{std}, it follows that $w(S) =\{1\}.$
\end{proof}

$R$-standard groups are pro-$p$ groups (cf. [6, Proposition 13.22]), and as mentioned in Section \ref{Section 1}, in this setting, the notion of conciseness can be strengthened to that of strong conciseness. Nonetheless, the following result shows that in the class of compact $R$-analytic groups there is no difference between those concepts. 

\begin{proposition}
\label{countable finite}
Let $M$ and $N$ be compact $R$-analytic manifolds and let $F \colon M \rightarrow N$ be an $R$-analytic map such that $|\ima{F}| < 2^{\aleph_0}.$ Then $\ima{F}$ is finite. 
\end{proposition}
\begin{proof}
Let $m = \dim{M}$ and $n = \dim{N}.$ Since $F$ is $R$-analytic, for each $x \in M$ there exists an $R$-chart $(U_x, \varphi)$ of $x$ in $M,$ an $R$-chart $(V_x, \psi)$ of $F(x)$ in $N$ and a tuple of power series $\G \in R[[X_1, \dots, X_m]]^{(n)}$ such that 
$$\psi \circ F \circ \varphi^{-1} (y) = \G(y) \ \forall y \in \varphi(U_x).$$
Since $(U_x, \varphi)$ is an $R$-chart, we can assume that $\varphi(U_x) = \left( \m^L \right)^{(m)}$ for some $L \in \N,$ and so $U_x$ is a profinite space. Hence, $F|_{U_x} \colon U_x \rightarrow N$ is a continuous map between profinite spaces such that $|F(U_x)| < 2^{\aleph_0},$ so, by \cite[Proposition 2.1]{DKS}, there exists $V \subseteq_o U_x$ such that $F|_V$ is constant. Hence, according to \cite[Lemma 9]{Ja}, $F|_{U_x}$ is constant. By compactness, $M = \bigcup_{z \in Z} U_z$ for some finite subset $Z \subseteq M,$ and thus $|\ima{F}| \leq |Z|$. 
\end{proof}

As a byproduct we obtain a lower bound for the cardinality of the set of word values in some finitely generated compact $R$-analytic groups. More specifically, in \cite{JaKl}, the authors present several properties of $R$-analytic groups, and among them, they establish a criterion to isolate finitely generated compact $R$-analytic groups satisfying a law, proving that they are $p$-adic analytic (here we are considering topological generation, i.e., the subgroup generated by $X$ is the topological closure of $\langle X \rangle$). 

\begin{corollary}[\textup{cf. \cite[Theorem 1.3]{JaKl}}]
Let $R$ be a pro-$p$ domain of characteristic $p$ or Krull dimension bigger than $1,$ and let $G$ be a non-discrete finitely generated compact $R$-analytic group. Then for every word $w,$ $|w\{G\}| \geq 2^{\aleph_0}.$ 
\end{corollary}
\begin{proof}
According to Lemma \ref{normstd} there exists an open $R$-standard subgroup $S$ of $G.$ Particularly, $S$ is non-discrete and finitely generated. Suppose that $|w\{G\}|< 2^{\aleph_0},$  then  $|w\{S\}| < 2^{\aleph_0}.$ Since $S$ is compact, by Proposition \ref{std} and Proposition \ref{countable finite}, $w$ is a law in $S,$ so, by \cite[Theorem 1.3]{JaKl}, $S$ admits both a $p$-adic analytic and an $R$-analytic structure, which is a contradiction with \cite[Theorem 13.23]{DDMS}.
\end{proof}

Consequently, the main theorem of this paper applies chiefly to non-finitely generated $R$-analytic groups.  

\section{Change of pro-\texorpdfstring{$p$}{p} domains}

For $\alpha = (\alpha_1, \dots, \alpha_d) \in \N_0^{(d)},$ let $\X^{\alpha}$ denote the monomial $X_1^{\alpha_1} \dots X_d^{\alpha_d}.$ Let $\varphi \colon R \rightarrow Q$ be a ring homomorphism and let $F(\X) = \sum_{\alpha \in \N_0^{(d)}} a_\alpha \X^{\alpha} \in R[[\X]]$ be a power series. By applying $\varphi$ to the coefficients of $F$ we obtain the power series 
 $$F_\varphi(\X) = \sum_{\alpha \in \N_0^{(d)}} \varphi(a_\alpha) \X^{\alpha} \in Q[[X_1, \dots, X_d]].$$
 
To transform coefficients in pro-$p$ domains, we will only consider the natural ring homomorphisms between local rings, i.e., \emph{local ring homomorphisms}, which are ring homomorphisms  $\varphi \colon (R, \m) \rightarrow (Q, \n),$ between local rings, such that $\varphi(\m) \subseteq \n.$ 

Whenever $\FL \in R[[X_1, \dots, X_n]]^{(m)}$ and $\G \in R[[X_1, \dots, X_m]]^{(l)}$ are two tuples of power series with coefficients in a pro-$p$ domain $(R, \m)$ such that $\FL(\0) \in \m^{(m)},$ the composition $\G \circ \FL$ is well-defined; and the following simple lemma shows that the foregoing change of rings commutes with the composition of power series.
 
 \begin{lemma}
 \label{konp}
Let $\varphi \colon (R,\m) \rightarrow (Q,\n)$ be a continuous local ring homomorphism. Let $\FL  \in R[[X_1, \dots, X_n]]^{(m)}$ and $\G \in R[[X_1, \dots, X_m]]^{(l)}$ be formal power series, and assume that $\FL(\0) \in \m^{(m)}.$ Then $(\G \circ \FL)_{\varphi}(X_1,\dots, X_n)=  \G_\varphi \circ \FL_\varphi(X_1, \dots, X_n).$
 \end{lemma}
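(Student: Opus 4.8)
The statement is that applying a continuous local ring homomorphism to coefficients commutes with composition of power series, provided the inner tuple $\FL$ has no constant term in the ``bad'' directions — more precisely $\FL(\0) \in \m^{(m)}$, which is exactly what makes $\G \circ \FL$ a well-defined element of $R[[X_1,\dots,X_n]]^{(l)}$ in the $\m$-adic topology. The plan is to reduce everything to the single-monomial case and then pass to the limit using continuity of $\varphi$.

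First I would recall why the composition makes sense at all: since $\FL(\0)\in\m^{(m)}$, every component $F_i$ lies in the ideal generated by $\m$ and $X_1,\dots,X_n$, so for a monomial $\ZZ^\beta$ of degree $|\beta|$ the substitution $\FL^\beta := F_1^{\beta_1}\cdots F_m^{\beta_m}$ lies in $(\m, X_1,\dots,X_n)^{|\beta|}$; hence for fixed total degree in the $X_j$ only finitely many $\beta$ contribute, and $\G\circ\FL = \sum_\beta b_\beta \FL^\beta$ converges coefficientwise. The same computation carried out over $Q$ shows $\G_\varphi \circ \FL_\varphi$ is well-defined, because $\FL_\varphi(\0) = \varphi(\FL(\0)) \in \varphi(\m)^{(m)} \subseteq \n^{(m)}$ by the local hypothesis. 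Next I would verify the commutation on a single monomial: for $\FL^\beta$ this is just that $\varphi$, being a ring homomorphism, respects products and sums of power series, i.e. $(F_i^{\beta_i})_\varphi = (F_i)_\varphi^{\beta_i}$ and $(F_1^{\beta_1}\cdots F_m^{\beta_m})_\varphi = (F_1)_\varphi^{\beta_1}\cdots(F_m)_\varphi^{\beta_m}$, which is a finite bookkeeping argument valid at the level of each coefficient. Then $\left(\sum_{|\beta|\le s} b_\beta \FL^\beta\right)_\varphi = \sum_{|\beta|\le s}\varphi(b_\beta)(\FL_\varphi)^\beta$ for every truncation $s$.

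Finally I would take the limit $s\to\infty$. Fix a multidegree $\gamma$ in the variables $X_1,\dots,X_n$; the coefficient of $\X^\gamma$ in $\G\circ\FL$ is a \emph{finite} sum $\sum_{|\beta|\le |\gamma|} b_\beta \cdot (\text{coeff of }\X^\gamma\text{ in }\FL^\beta)$, each term an element of $R$, and applying $\varphi$ term by term — which is legitimate since the sum is finite — gives precisely the coefficient of $\X^\gamma$ in $\G_\varphi\circ\FL_\varphi$. Here continuity of $\varphi$ is what guarantees that the coefficientwise-convergent expressions on both sides are the genuine limits in the respective $\m$-adic and $\n$-adic topologies, so that the identification of coefficients actually identifies the power series. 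The only subtlety — and the place to be careful rather than a genuine obstacle — is ensuring the finiteness of the contributing $\beta$ for each output degree, i.e. that no infinite summation of $\varphi$-images is implicitly required; once the $\m$-adic estimate above is in place this is automatic, so the lemma follows.
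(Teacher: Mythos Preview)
Your overall strategy---reduce to monomials, check compatibility with products, pass to the limit---is sound, but the key finiteness claim is wrong. You write that the coefficient of $\X^\gamma$ in $\G\circ\FL$ is the \emph{finite} sum $\sum_{|\beta|\le|\gamma|} b_\beta\cdot(\text{coeff of }\X^\gamma\text{ in }\FL^\beta)$, and later that ``no infinite summation of $\varphi$-images is implicitly required.'' This is false under the hypothesis $\FL(\0)\in\m^{(m)}$ rather than $\FL(\0)=\0$. Take $n=m=1$, $F=a+X$ with $0\ne a\in\m$: then the coefficient of $X^\gamma$ in $F^\beta=(a+X)^\beta$ is $\binom{\beta}{\gamma}a^{\beta-\gamma}$, nonzero for every $\beta\ge\gamma$. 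The estimate $\FL^\beta\in(\m,X_1,\dots,X_n)^{|\beta|}$ does \emph{not} say the $X$-degree is at least $|\beta|$; it says the $\m$-adic order plus the $X$-degree is at least $|\beta|$. So the sum defining each coefficient is genuinely infinite, convergent only $\m$-adically, and passing $\varphi$ through it requires the continuity hypothesis in an essential way---not as an afterthought. Your argument is easily repaired: drop the finiteness claim and argue that $\varphi$, being continuous, sends the $\m$-adically convergent sum in $R$ to the corresponding $\n$-adically convergent sum in $Q$.

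The paper takes a different and cleaner route that sidesteps all coefficient bookkeeping: it invokes the universal property of the completed power-series ring three times to produce continuous ring homomorphisms $\Phi_\varphi\colon R[[\X]]\to Q[[\X]]$, $\Phi_2\colon R[[\Y]]\to R[[\X]]$ (substitution of $\FL$), and $\Phi_1\colon R[[\Y]]\to Q[[\X]]$ (substitution of $\FL_\varphi$ after applying $\varphi$), and then observes that $\Phi_1$ and $\Phi_\varphi\circ\Phi_2$ agree on $R$ and on each $Y_i$, hence are equal by uniqueness; evaluating both at $\G$ gives the lemma. All the convergence issues are absorbed into the existence of these universal maps. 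Your direct approach, once corrected, is more elementary; the paper's is shorter and makes the role of continuity structural rather than computational.
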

 \begin{proof}
Using the universal property of power series rings (cf. \cite[Chapter 0, 7.5.3]{EGA}), there exists a unique continuous ring homomorphism 
 $$\Phi_\varphi \colon R[[X_1, \dots, X_n]] \rightarrow Q[[X_1, \dots, X_n]]$$ 
 such that $\Phi_\varphi(\mathbf{H}(\X)) = \mathbf{H}_\varphi(\X)$  for all $\mathbf{H} \in R[[\X]],$ where $\X = (X_1, \dots, X_n).$

Let $\FL(\X)= (F_1(\X), \dots, F_m(\X))$. Since $\Phi_\varphi\left(F_1(\X) \right), \dots, \Phi_\varphi\left(F_m(\X)\right)$ are in $(\n, X_1, \dots, X_n),$ the maximal ideal of $Q[[\X]]$, using the universal property of power series rings we can define the continuous map 
$$\Phi_1 \colon R[[Y_1, \dots, Y_m]] \rightarrow Q[[X_1, \dots, X_n]]$$
 such that $\Phi_1(r) = \varphi(r)$ for all $r \in R,$ and $\Phi_1(Y_i) = \left(F_i \right)_\varphi(\X) $ for all $i \in \{1, \dots, m\}.$  Similarly, define the map 
$$\Phi_2 \colon R[[Y_1, \dots, Y_m]] \rightarrow R[[X_1, \dots, X_n]]$$ 
 such that $\Phi_2|_R = \Id_R,$ and $\Phi_2(Y_i) = F_i(\X)$ for all $i \in \{1, \dots, m\}.$  

Note that $\Phi_1(r) = \Phi_\varphi \circ \Phi_2(r) = \varphi(r)$ for all $r \in R,$ and that $\Phi_1(Y_i) = \Phi_\varphi \circ \Phi_2(Y_i) = \left(  F_i \right)_\varphi(\X)$ for all $i \in \{  1, \dots, m\}.$ Therefore, by the uniqueness of the universal property we have that $\Phi_1 = \Phi_\varphi \circ \Phi_2,$ and so, in particular,
\[ \left( \G \circ \FL \right)_\varphi(\X) = \Phi_\varphi \circ \Phi_2(\G(\Y)) = \Phi_1(\G(\Y)) = \G_\varphi \circ \FL_\varphi(\X). \qedhere \]
 \end{proof} 
Hence, the change of rings preserves formal power series identities, so in particular:

\begin{corollary}
\label{FGL}
Let $R$ and $Q$ be pro-$p$ domains, let $\FL \in R[[X_1, \dots, X_{2d}]]^{(d)}$ be a formal group law with formal inverse $\I$ and let $\varphi \colon R \rightarrow Q$  be a continuous local ring homomorphism. Then $\FL_\varphi$ is a formal group law, with formal inverse $\I_\varphi.$
\end{corollary}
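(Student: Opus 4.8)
The plan is to observe that being a $d$-dimensional formal group law, together with the claim that $\I$ is its formal inverse, is encoded entirely by finitely many \emph{formal} power series identities, and to transport each of them through $\varphi$ using Lemma \ref{konp}. Write $\FL = \FL(\X, \Y)$ with $\X = (X_1, \dots, X_d)$ and $\Y = (Y_1, \dots, Y_d)$: a formal group law satisfies the two-sided identity axioms $\FL(\X, \0) = \X$ and $\FL(\0, \Y) = \Y$ and the associativity axiom $\FL(\FL(\X, \Y), \ZZ) = \FL(\X, \FL(\Y, \ZZ))$, while its formal inverse satisfies $\FL(\I(\X), \X) = \FL(\X, \I(\X)) = \0$. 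That these hold as identities of power series, and not merely as equalities of the induced maps, is part of the theory of formal group laws over $R$ (it also follows, in the setting of Section 3, from \cite[Lemma 3.1]{Ja} applied to an $R$-standard group; cf. \cite[Proposition 13.16]{DDMS}).

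First I would record the bookkeeping that makes Lemma \ref{konp} applicable in each case: evaluating the identity axiom at $\X = \Y = \0$ gives $\FL(\0, \0) = \0$, so $\FL$ has zero constant term; and evaluating $\FL(\I(\X), \X) = \0$ at $\X = \0$ together with $\FL(\cdot, \0) = \Id$ forces $\I(\0) = \0$, so $\I$ has zero constant term as well. Consequently every tuple occurring as an inner argument in the compositions above --- namely $(X_1, \dots, X_d, 0, \dots, 0)$, $(0, \dots, 0, Y_1, \dots, Y_d)$, $(\FL(\X, \Y), \ZZ)$, $(\X, \FL(\Y, \ZZ))$, $(\I(\X), \X)$ and $(\X, \I(\X))$ --- has all of its constant terms in $\m$, so the hypothesis ``$\FL(\0) \in \m^{(m)}$'' of Lemma \ref{konp} is satisfied at each instance.

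Next I would apply $\varphi$ to the coefficients of both sides of each of the identities above. Since passing to the $\varphi$-image fixes the tuples $(X_1, \dots, X_d, 0, \dots, 0)$ and $(0, \dots, 0, Y_1, \dots, Y_d)$, sends $\FL(\X, \Y)$ to $\FL_\varphi(\X, \Y)$ and $\I(\X)$ to $\I_\varphi(\X)$, Lemma \ref{konp} converts the identity axioms into $\FL_\varphi(\X, \0) = \X$ and $\FL_\varphi(\0, \Y) = \Y$, the associativity axiom into $\FL_\varphi(\FL_\varphi(\X, \Y), \ZZ) = \FL_\varphi(\X, \FL_\varphi(\Y, \ZZ))$, and the inverse relations into $\FL_\varphi(\I_\varphi(\X), \X) = \FL_\varphi(\X, \I_\varphi(\X)) = \0$. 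Hence $\FL_\varphi$ satisfies the axioms of a $d$-dimensional formal group law over $Q$ and $\I_\varphi$ is its (necessarily unique) formal inverse. Should one wish to exhibit $\FL_\varphi$ as the multiplication of an honest $Q$-standard group, one further notes that $\FL_\varphi$ and $\I_\varphi$ have coefficients in $Q$ and zero constant term, so $(\n^L)^{(d)}$ is closed under both for every $L \in \N$, and the translated identities then show that $((\n^L)^{(d)}, \FL_\varphi)$ is a $Q$-standard group with multiplication $\FL_\varphi$ and inversion $\I_\varphi$.

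The argument presents essentially no obstacle; the only point needing care is the one isolated in the second step --- verifying that the constant-term hypothesis of Lemma \ref{konp} really does hold for each of the compositions that occur, which is precisely why it matters that formal group laws and their formal inverses have vanishing constant term. Everything else is a mechanical transcription.
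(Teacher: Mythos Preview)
Your proposal is correct and follows essentially the same route as the paper: both verify that $\FL$ and $\I$ have zero constant term (the paper cites \cite[Proposition 13.16]{DDMS} for $\FL(\0)=\0$, while you derive it from the identity axiom), and then transport the associativity, identity, and inverse identities through $\varphi$ via Lemma~\ref{konp}. Your version is simply more explicit about checking the constant-term hypothesis at each composition, and adds the optional remark on realising $\FL_\varphi$ as the multiplication of a $Q$-standard group, which the paper defers to the paragraph following the corollary.
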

\begin{proof}
Let $\X,$ $\Y$ and $\ZZ$ be $d$-tuples of indeterminates. By \cite[Proposition 13.16]{DDMS}, $\FL(\0) =\0,$ so by Lemma \ref{konp},
\begin{enumerate}
\item[(i)] since 
$\FL(\FL(\X, \Y), \mathbf{Z}) = \FL(\X, \FL(\Y, \mathbf{Z}))$ then 
$$\FL_{\varphi}(\FL_{\varphi}(\X, \Y), \mathbf{Z}) = \FL_{\varphi}(\X, \FL_{\varphi}(\Y, \mathbf{Z})),$$
\item[(ii)] and since $\FL(\X, \0 ) = \FL(\0, \X) = \X ,$ then $\FL_{\varphi}(\X, \0 ) = \FL_{\varphi}(\0, \X) = \X,$ 
\end{enumerate}
and thus $\FL_\varphi$ is a formal group law (cf. \cite[Definition 13.14]{DDMS}). Finally, since $\FL(\I(\X), \X) = \FL(\X, \I(\X)) = \0$ and $\I(\0) = \0,$ by Lemma \ref{konp}:
\[ \FL_{\varphi}\left(\I_{\varphi}(\X), \X\right) = \FL_{\varphi}\left(\X, \I_{\varphi}(\X)\right) = \0. \qedhere \]  
\end{proof}

Let $(R,\m)$ and $(Q,\n)$ be pro-$p$ domains, let $\varphi \colon R \rightarrow Q$ be a continuous local ring homomorphism and let  $(S, \phi)$ be an $R$-standard group, of level $N$ and dimension $d,$ with formal group law $\FL.$ In particular, $\phi(S) = \left( \m^N\right)^{(d)}.$ 

 Using the formal group law $\FL_\varphi,$ $L = \left( \n^N \right)^{(d)}$ can be endowed with a group operation making it a $Q$-standard group. Indeed, the group operation is given by 
 \begin{equation}
 \label{Qstd}
x*y  = \FL_{\varphi}(x,y ) \  \forall x, y \in L.
\end{equation}
Let $G$ be a compact $R$-analytic group, and let $(S, \phi)$ be the open normal $R$-standard subgroup constructed in Lemma \ref{normstd}. Then, we can set up a $Q$-analytic group,  whose open normal $Q$-standard subgroup is the preceding subgroup $L.$ Indeed, let $T$ be a left transversal for $S$ in $G,$ and assume that $1 \in T.$ We will use the following notation: whenever $g \in G$ then $\tilde{g}$ is the representative of $gS$ in $T.$ By Lemma \ref{eragiketak} and the remark after it, using the notation therein, if $x \in t S$ and $y \in r S,$ their product is given by
$$\phi_{\widetilde{tr}}\left(x y \right)=  A_{tr}^{\widetilde{tr}} \left(\FL \left( \C_{r}(\phi_t(x)), \phi_r(y) \right)\right).$$
Define $H = T \times L$ and the homeomorphisms $\psi_t \colon (t, L) \rightarrow L$ such that $\psi_t(t,l) =l.$ If $x \in (t,L)$ and $y  \in (r,L)$,  imitating the previous formula define the operation:
\begin{equation}
\label{operation}
x * y = \left(\widetilde{tr}, \left(A_{tr}^{\widetilde{tr}}\right)_\varphi \left( \FL_{\varphi}\left( \left(\C_{r} \right)_{\varphi}(\psi_t(x)), \psi_r(y) \right) \right)\right). 
\end{equation}
\begin{remark*}
We can identify $(1, L)$ with $L$ for plainness, and then (\ref{operation}) extends the operation (\ref{Qstd}).
\end{remark*}
\begin{lemma}
\label{Qazpitalde}
With the notation above, $(H, *)$ is a compact $Q$-analytic group with open normal $Q$-standard subgroup $L.$
\end{lemma}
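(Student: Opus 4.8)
The plan is to observe that $*$ is, chart by chart, nothing but the $\varphi$-transform of the operation of $G$ read off the atlas $\{(tS,\phi_t)\}_{t\in T}$, and that this atlas presents $G$ as a ``group defined by power series over $R$''; since $\varphi$ preserves such power series data (Lemma~\ref{konp} and Corollary~\ref{FGL}), the group axioms transfer to $(H,*)$, after which compactness and the assertions about $(1,L)$ are quick. Concretely: since $S$ is open in the compact group $G$, the transversal $T$ is finite, so $H=\bigsqcup_{t\in T}(t,L)$ is a finite disjoint union of copies of the profinite space $L=(\n^N)^{(d)}$; in particular $H$ is compact Hausdorff, and the charts $\{((t,L),\psi_t)\}_{t\in T}$ have pairwise disjoint domains, so they form an atlas onto $(\n^N)^{(d)}$ with no transition maps.

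For ordered pairs $t,r\in T$ put $\mathbf{M}_{t,r}(\X,\Y):=A_{tr}^{\widetilde{tr}}\bigl(\FL(\C_r(\X),\Y)\bigr)\in R[[X_1,\dots,X_{2d}]]^{(d)}$ and $\mathbf{J}_t(\X):=A_{t^{-1}}^{\widetilde{t^{-1}}}\bigl(\C_{t^{-1}}(\I(\X))\bigr)\in R[[X_1,\dots,X_d]]^{(d)}$; by Lemma~\ref{eragiketak} and the remark following it, $\phi_{\widetilde{tr}}(xy)=\mathbf{M}_{t,r}(\phi_t(x),\phi_r(y))$ for $x\in tS$, $y\in rS$, and $\phi_{\widetilde{t^{-1}}}(x^{-1})=\mathbf{J}_t(\phi_t(x))$ for $x\in tS$. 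Now $\FL(\0)=\0$, $\I(\0)=\0$, each $\C_g$ has zero constant term, and $A_x^y(\0)=\phi_y(x)=\phi(y^{-1}x)\in(\m^N)^{(d)}$ whenever $xS=yS$ — the only situation in which $A_x^y$ is used — so $\mathbf{M}_{t,r}$ and $\mathbf{J}_t$ have constant term in $(\m^N)^{(d)}\subseteq\m^{(d)}$, and every composition peeled off from them meets the hypotheses of Lemma~\ref{konp}. Applying that lemma, the operation (\ref{operation}) reads, in charts, $\psi_{\widetilde{tr}}(x*y)=(\mathbf{M}_{t,r})_\varphi(\psi_t(x),\psi_r(y))$; we then define the candidate identity of $H$ as $(1,\0)$ and the candidate inversion by $(t,l)^{-1}=\bigl(\widetilde{t^{-1}},(\mathbf{J}_t)_\varphi(l)\bigr)$.

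It remains to transfer the group axioms. Each axiom of $G$ — associativity, $x1=1x=x$, $xx^{-1}=x^{-1}x=1$ — restricted to fixed cosets decomposes into an equality of ``transversal parts'' in the finite group $G/S$ (e.g.\ $\widetilde{\widetilde{tr}\,s}=\widetilde{t\,\widetilde{rs}}$, $\widetilde{t\,\widetilde{t^{-1}}}=1$, $\widetilde{1}=1$) together with an equality of two power series in $R[[\dots]]^{(d)}$ got by composing the $\mathbf{M}_{t,r}$, $\mathbf{J}_t$, $\0$, and $\C_1(\X)=\X$; the latter equalities hold as identities of functions on the relevant Cartesian powers of $(\m^N)^{(d)}$ because $G$ is a group and the $\phi_t$ are bijections, hence hold as formal power series identities by \cite[Lemma 3.1]{Ja} (a power series agreeing with another on an open subset of a $(\m^L)^{(m)}$ agrees with it identically). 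Applying $\varphi$ and Lemma~\ref{konp} term by term, the same identities hold with every series replaced by its $\varphi$-transform; as these $\varphi$-transforms are exactly the series governing $*$, the group axioms hold in $(H,*)$ with identity $(1,\0)$ and the inverses above, and $*$ and inversion are continuous since $T$ is finite discrete and a formal power series over the profinite ring $Q$ converges $\n$-adically and continuously on $(\n^N)^{(d)}$. I expect this transfer of associativity to be the only step needing care: namely, recognising that the axioms of $G$ read off the atlas $\{(tS,\phi_t)\}$ are genuine formal identities (via \cite[Lemma 3.1]{Ja}) and checking that the constant-term hypotheses of Lemma~\ref{konp} hold at every composition one peels off.

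Finally, restricting (\ref{operation}) to $t=r=1$ and using $A_1^1=\Id$ and $\C_1(\X)=\X$ gives $(1,l)*(1,m)=(1,\FL_\varphi(l,m))$, so $(1,L)$ is a subgroup which $\psi_1$ identifies, as a topological group, with the $Q$-standard group of (\ref{Qstd}); it is clopen, hence open, in $H$. It is normal, since for $x\in(r,L)$ the transversal part of $x^{-1}*y*x$ equals $\widetilde{\widetilde{r^{-1}}\,r}=1$ for every $y\in(1,L)$, as $\widetilde{r^{-1}}\,r\in S$ by normality of $S$ in $G$; hence $x^{-1}*(1,L)*x\subseteq(1,L)$. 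And $(H,*)$ is $Q$-analytic: in the transition-free atlas $\{((t,L),\psi_t)\}_{t\in T}$ the group operations are the power series $(\mathbf{M}_{t,r})_\varphi$ and $(\mathbf{J}_t)_\varphi$, hence $Q$-analytic, while $(1,L)$ satisfies conditions (i)--(iii) in the definition of a $Q$-standard group of level $N$ and dimension $d$, its multiplication being $\FL_\varphi$, which by Corollary~\ref{FGL} is a formal group law and has zero constant term because $\FL(\0)=\0$.
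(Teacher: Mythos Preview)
Your proof is correct and follows essentially the same approach as the paper: encode the group operations of $G$ in the atlas $\{(tS,\phi_t)\}$ as power series identities (the paper uses Lemma~\ref{eragiketak} directly, you package these as $\mathbf{M}_{t,r}$ and $\mathbf{J}_t$), then transfer each axiom to $(H,*)$ via Lemma~\ref{konp}, and finally verify that $(1,L)$ is open, normal and $Q$-standard. You are somewhat more explicit than the paper about checking the constant-term hypotheses of Lemma~\ref{konp} and about invoking \cite[Lemma~3.1]{Ja} to pass from functional to formal identities, but these are refinements of the same argument rather than a different route.
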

\begin{proof}
Firstly, $H$ is a compact $Q$-analytic manifold with respect to the atlas $\{ (tL, \psi_t) \}_{t \in T}$ -- abusing the notation we will use $tL$ to denote $(t,L)$ --. 
Moreover, $H$ is a group. Indeed,\\
\linebreak
(i) let $t,r,p \in T,$ from Lemma \ref{eragiketak} and the associativity of $G$ we know that as power series:
\begin{multline*}
A_{t \cdot \widetilde{rp}}^{\widetilde{trp}}\left( \FL\left( \C_{\widetilde{rp}}(\X), A_{rp}^{\widetilde{rp}}(\FL(\C_p(\Y), \ZZ)) \right) \right) = \\A_{\widetilde{tr}\cdot p}^{\widetilde{trp}}\left( \FL\left( \C_p\left( A_{tr}^{\widetilde{tr}}\left( \FL\left( \C_r(\X), \Y \right)\right) \right), \ZZ \right) \right).
\end{multline*}
Thus, let $x \in tL,$ $y \in rL$ and $z \in pL,$ then by Lemma \ref{konp} and (\ref{operation}): 
\begin{small}
\begin{align*}
\MoveEqLeft[3] \psi_{\widetilde{trp}}(x*(y*z)) & \\&=  \left(A_{t \cdot \widetilde{rp}}^{\widetilde{trp}}\right)_{\varphi} \left( \FL_\varphi\left( \left(\C_{\widetilde{rp}}\right)_{\varphi}(\psi_t(x)), \left(A_{rp}^{\widetilde{rp}}\right)_{\varphi}\left(\FL_\varphi\left(\left(\C_{p}\right)_{\varphi}(\psi_r(y)), \psi_p(z)\right)\right) \right) \right) \\&
= \left(A_{\widetilde{tr}\cdot p}^{\widetilde{trp}} \right)_{\varphi}\left( \FL_\varphi\left( \left(\C_{p}\right)_{\varphi}\left( \left(A_{tr}^{\widetilde{tr}} \right)_{\varphi}\left( \FL_\varphi\left( \left(\C_{r}\right)_{\varphi}(\psi_t(x)), \psi_r(y) \right)\right) \right), \psi_p(z) \right) \right) 
 \\& = \psi_{\widetilde{trp}}((x*y)*z).
 \end{align*}
 \end{small}
(ii) The neutral element is $ (1,\0) \in L.$ \\
\linebreak
(iii) The inverse of $x \in t L$ is given by 
$$ y = \left(\widetilde{t^{-1}},  \left(A_{t^{-1}}^{\widetilde{t^{-1}}}\right)_\varphi \circ \left(\C_{t^{-1}}\right)_{\varphi} \circ \I_{\varphi}(\psi_t(x)) \right).$$
Indeed, by Lemma \ref{eragiketak}, we know that
$$A_{t\cdot \widetilde{t^{-1}}}^1\left( \FL\left( \C_{\widetilde{t^{-1}}}(\X), A_{t^{-1}}^{\widetilde{t^{-1}}}\left( \C_{t^{-1}}(\I(\X)) \right)  \right) \right) = \0.$$
Hence, by Lemma \ref{konp} and (\ref{operation}),
\begin{align*}
 \0&= 
\left(A_{t \cdot \widetilde{t^{-1}}}^1\right)_{\varphi}\left( \FL_\varphi\left( \left(\C_{\widetilde{t^{-1}}}\right)_{\varphi}(\psi_t(x)), \left(A_{t^{-1}}^{\widetilde{t^{-1}}}\right)_{\varphi}\left( \left(\C_{t^{-1}} \right)_{\varphi}(\I_\varphi(\psi_t(x))) \right)  \right) \right)\\ & 
= \left(A_{t\cdot \widetilde{t^{-1}}}^1\right)_{\varphi}\left( \FL_\varphi\left( \left(\C_{\widetilde{t^{-1}}}\right)_{\varphi}(\psi_t(x)), \psi_{\widetilde{t^{-1}}}(y)  \right) \right)\\& 
=\psi_1( x * y).
 \end{align*}
 Similarly, $y*x  = (1,\0). $\\
 
 Finally, if $x \in L$ and $h \in tL \subseteq H,$ since $1^t =1,$ then $x^{*h} \in L.$ 
\end{proof}

Let $(t_1, \dots, t_k) \in T^{(k)}$. According to (\ref{wordmap}), the word map $w\colon G^{(k)} \rightarrow G$ is given in $t_1 S \times \dots \times t_kS$ by the power series $\W_{t_1, \dots, t_k},$ i.e.,
$$\phi_{\widetilde{w(t_1, \dots, t_k)}} (w(x_1, \dots, x_k)) = \W_{t_1, \dots, t_k} \left( \phi_{t_1}(x_1), \dots, \phi_{t_k}(x_k) \right) \ \forall x_i \in t_iS.$$
Then, by the definition of $F_{\varphi},$ $\I_\varphi$ and (\ref{operation}), the word map $w \colon H^{(k)} \rightarrow H$ is given in $t_1 L \times \dots \times t_kL$ by the power series $\left(\W_{t_1, \dots, t_k}\right)_{\varphi},$ i.e,
 \begin{equation}
 \label{genword}
 \psi_{\widetilde{w(t_1, \dots, t_k)}} (w(x_1, \dots, x_k)) = \left(\W_{t_1, \dots, t_k}\right)_{\varphi}\left( \psi_{t_1}(x_1), \dots, \psi_{t_k}(x_k) \right) \ \forall x_i \in t_iL.
 \end{equation}

\section{Proof of the Main Theorem}

The demonstration technique consists in reducing the problem to a group that is analytic over a pro-$p$ domain of Krull dimension one, using the change of rings described in the previous section. Hence, we have first to prove the 1-dimensional case.

\subsection{Pro-\texorpdfstring{$p$}{p} domains of Krull dimension one}

We shall start with a well-known but useful observation, whose proof can be found, for instance, in \cite[Lemma 4]{DMS}.
\begin{lemma}
\label{abelian}
Let $G$ be a group and let $w$ be a word such that $w\{G\}$ is finite. Then $w(G)'$ is finite, and $w(G)$ is finite if and only if it has finite exponent. 
\end{lemma}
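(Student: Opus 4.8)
The plan is to exploit the elementary fact that the set of word values is invariant under conjugation, since $w(x_1,\dots,x_k)^g = w(x_1^g,\dots,x_k^g)$ for all $g \in G$. Thus if $w\{G\}$ is finite, every word value lies in a finite conjugacy class, so its centraliser in $G$ has finite index; consequently the subgroup $Z := \bigcap_{v \in w\{G\}} C_G(v)$ has finite index in $G$. By construction $Z$ centralises every word value and hence centralises $w(G) = \langle w\{G\}\rangle$, so in particular $Z \cap w(G)$ is a central subgroup of $w(G)$.

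Next I would invoke Schur's theorem. Since $w(G)/(Z\cap w(G)) \cong w(G)Z/Z \leq G/Z$ is finite, the central subgroup $Z\cap w(G)$ has finite index in $w(G)$; a fortiori $w(G)/Z(w(G))$ is finite, and Schur's theorem gives that $w(G)'$ is finite. This establishes the first assertion.

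For the second assertion only the non-trivial direction needs work, so assume $w(G)$ has finite exponent. The extra ingredient is that $w(G)$ is finitely generated, being generated by the finite set $w\{G\}$. Hence $w(G)/w(G)'$ is a finitely generated abelian group of finite exponent, and therefore finite; combined with the finiteness of $w(G)'$ proved above, it follows that $w(G)$ itself is finite. The converse implication is trivial.

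I do not anticipate any real obstacle: the argument is entirely classical and short, the only points demanding a little care being the verification that conjugation permutes word values, the observation that $w(G)$ is finitely generated, and the correct application of Schur's theorem to centre-by-finite groups.
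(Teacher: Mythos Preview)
Your argument is correct and is the classical route to this well-known fact. The paper does not supply its own proof but refers to \cite[Lemma~4]{DMS}, where essentially the same argument---conjugation-invariance of $w\{G\}$, the resulting centre-by-finite structure of $w(G)$, and Schur's theorem---is carried out.
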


We have already pointed out that compact $p$-adic analytic groups are concise. By Cohen's Structure Theorem (loc. cit.) and \cite[Theorem 13.23]{DDMS}, the result extends to analytic groups over Krull dimension one and characteristic zero pro-$p$ domains.  Moreover, whenever $R$ is a pro-$p$ domain of Krull dimension one and characteristic $p,$ according to Cohen's Structure Theorem (loc. cit.) $R$ is a finitely generated  free $\F_p[[t]]$-module, so by \cite[Examples 13.9(iv)]{DDMS}, any $R$-analytic group is by restriction of scalars analytic over $\F_p[[t]].$ Note that in this case, any $R$-analytic group has also analytic structure over the local field $\F_p((t))$ (cf. \cite[Section 13.1]{DDMS}).

\begin{proposition}
\label{PID}
Let $R$ be a pro-$p$ domain of Krull dimension $1$. Then, every word is concise in the class of compact $R$-analytic groups.
\end{proposition}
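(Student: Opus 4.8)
The plan is to dispose of the two cases furnished by Cohen's Structure Theorem. When $\charac R=0$ nothing new is required: the discussion preceding the statement reduces the problem to compact $p$-adic analytic groups, which are extensions of pro-$p$ groups of finite rank by finite groups and hence linear over $\Q_p$ by \cite[Theorem 7.19]{DDMS}, so Merzljakov's theorem \cite{Mer} applies. Thus the substance of the proposition is the case $\charac R=p$, where, as was just noted, we may assume $R=\F_p[[t]]$ and that $G$ is a compact $\F_p((t))$-analytic group (restriction of scalars along the finite free extension $\F_p[[t]]\subseteq R$, together with \cite[Section 13.1]{DDMS}).

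For that case my plan is to show that $G$ is \emph{linear over the local field} $\F_p((t))$, i.e. that $G$ embeds as a closed subgroup of $\GL_n(\F_p((t)))$ for some $n$, and then to invoke Merzljakov's theorem once more — it holds for linear groups over an arbitrary field, in particular in characteristic $p$. To build the embedding I would first pass (Lemma \ref{normstd} and Section 3) to an open $\F_p[[t]]$-standard subgroup $S\cong(\m^{L})^{(d)}$ of sufficiently high level, so that $S$ is ``$\F_p[[t]]$-uniform'' in the sense of \cite[Chapter 13]{DDMS}; this identifies $S$ with the group of $\F_p[[t]]$-points of a formal group law $\FL$ and equips it with an associated Lie algebra, free of rank $d$ over $\F_p[[t]]$. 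Applying Ado's theorem in the form valid over the field $\F_p((t))$ (Iwasawa's theorem) and rescaling, one obtains a faithful embedding of this Lie algebra into $t^{a}\mathfrak{gl}_n(\F_p[[t]])$ for $a$ large; for $a$ large enough the target is itself uniform, with associated group the congruence subgroup $1+t^{a}M_n(\F_p[[t]])\le\GL_n(\F_p[[t]])$, and the Lie-algebra embedding should integrate to a closed embedding of an open subgroup of $S$ into that congruence subgroup. Since $G$ is then finite-by-(closed subgroup of $\GL_n(\F_p[[t]])$), it embeds in $\GL_m(\F_p((t)))$ for a suitable $m$, and \cite{Mer} yields that $w$ is concise in $G$. (Lemma \ref{abelian}, Proposition \ref{std} and Corollary \ref{marginal} are not needed for this route, although they give the stronger conclusion recorded in the introduction once conciseness is in hand.)

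The delicate point is exactly the integration step. Over $\Q_p$ one moves between the Lie algebra and the group by the mutually inverse maps $\log$ and $\exp$, but in characteristic $p$ there is no exponential, so this must be replaced by a direct argument with the power series defining $\FL$: one has to realise the faithful representation of the Lie algebra as a homomorphism of formal groups over $\F_p[[t]]$ and verify that the resulting power series converge on $(t^{a}\F_p[[t]])^{(d)}$ for $a$ large. I expect this to be the main obstacle; everything after it is a routine appeal to Merzljakov's theorem. If a ready reference for ``compact analytic groups over a non-archimedean local field are linear over that field'' is available (the arguments of \cite{JaKl} and \cite[Chapter 13]{DDMS} are precisely of the right flavour), the whole of the second paragraph collapses to a citation followed by a single invocation of \cite{Mer}.
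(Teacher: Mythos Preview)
Your treatment of the characteristic-zero case matches the paper's. The gap in characteristic $p$ is exactly where you locate it, and it is not merely ``delicate'': in characteristic $p$ there is no Lazard correspondence, so a faithful Lie-algebra representation over $\F_p((t))$ need not integrate to a faithful representation of the group. Iwasawa's form of Ado gives an embedding of Lie algebras, but without $\exp/\log$ (the BCH series has unbounded $p$-denominators) there is no mechanism in \cite[Chapter~13]{DDMS} or \cite{JaKl} to promote a Lie-algebra homomorphism to a homomorphism of $\F_p[[t]]$-standard groups, let alone to control injectivity on an open subgroup. The ``ready reference'' you hope for does not exist in the sources cited; what \emph{is} available is \cite[Proposition~5.1]{Jai}, which gives linearity only of $G/Z(G)$, not of $G$ itself. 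So as written the proposal is a plan with an unfilled hole at its centre, not a proof.

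The paper sidesteps this entirely and never invokes Merzljakov in characteristic $p$. It uses only that $G/Z$ is linear over $K=\F_p((t))$ \cite[Proposition~5.1]{Jai}; since $G$ virtually satisfies an identity (Proposition~\ref{std} and Lemma~\ref{normstd}), the Topological Tits Alternative \cite{BG} forces $G/Z$ to be virtually soluble. Passing to the Zariski closure $\GG$ in $\GL_n(K)$ and using that $[\NN^\circ,\GG]$ is unipotent (hence of finite exponent in characteristic $p$), one extracts a normal subgroup $H=[[N,G]Z,G]\le G$ of finite exponent with $G/H$ virtually nilpotent of class $\le 3$. Conciseness in $G/H$ then comes from Turner-Smith \cite{TS}, while Lemma~\ref{abelian} reduces to $w(G)$ finitely generated abelian, so $w(G)\cap H$ is finite; the two pieces assemble to give $|w(G)|<\infty$. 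The argument is structural (Tits alternative $+$ algebraic-group theory $+$ conciseness in virtually nilpotent groups) rather than a direct linearity-plus-Merzljakov appeal.
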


\begin{proof}
We only have to deal with the case of positive characteristic. Let $G$ be a compact $R$-analytic group, let $w$ be a word such that $w\{G\}$ is finite, let $S$ be a normal open $R$-standard subgroup of $G$ where $w$ is a law (compare with Corollary \ref{virtually law}), $Z= Z(S),$ $K = \F_p((t))$ and $\bar{K}$ the algebraic closure of $K$. We can assume that $G$ is $K$-analytic.  

Firstly, by \cite[Proposition 5.1]{Jai} $S/Z$ is linear over the local field $K$ and it satisfies an identity, so by the topological Tits alternative (cf. \cite[Theorem 1.3]{BG}) $S/Z$ has an open soluble subgroup. Let $\mathcal{S}$ be the Zariski closure of $S/Z$ in $\GL_n(\bar{K}),$ which is also virtually soluble (cf. \cite[Theorem 5.11]{Weh}). Let  $\mathcal{S}^\circ$ be identity component of $\mathcal{S}$, which according to  \cite[Proposition 7.3]{Hump} has finite index in $\mathcal{S}$ and  is the soluble radical of $\mathcal{S},$ namely the largest connected soluble subgroup of the algebraic group $\mathcal{S}.$
 
Let $N/Z$ be the intersection of $S/Z$ with $\mathcal{S}^\circ,$ then $N$ has finite index in $S$ and passing to the normal core if necessary we assume that $N$ is a normal subgroup of finite index in $G$.  Since $[\mathcal{S}^{\circ}, \mathcal{S}^\circ]$ is unipotent (cf. \cite[Lemma 19.5]{Hump}), and since we are in characteristic $p,$ $[N, N] Z/Z$ is nilpotent of finite exponent. Therefore, $ [N,N]Z$ is nilpotent, and, according to \cite[Lemma 5.1]{JaKl}, $H= [N,N ,S]$ has finite exponent. Finally, 
$$H = [N,N,S] \geq [N,N,N],$$
and so $G/H$ is virtually nilpotent of class at most $2.$   

Furthermore, since $w(G)'$ is finite by Lemma \ref{abelian}, up to a quotient we can assume that $w(G)$ is a finitely generated abelian subgroup. Hence, since $H$ has finite exponent, $w(G) \cap H$ is finite.  

Finally, since $|w\{ G/H\}| \leq |w\{G\}|$ and  $G/H$ is virtually nilpotent, then $w\left( G/H \right)$ is finite by \cite[Corollary 2]{TS}. Now, the isomorphism
$$w\left( \frac{G}{H} \right) = \frac{w(G)H}{H} \cong \frac{w(G)}{w(G) \cap H}$$
yields the result. 
\end{proof}

Note that, as a byproduct, we have proved that any soluble $R$-analytic group, where $R$ has characteristic $p,$ is virtually (nilpotent of finite exponent)-by-(nilpotent of class at most $2$).

\subsection{General case}

When $(R, \m)$ is a pro-$p$ domain, for any $a \in \m^{(m)}$ we have the continuous \emph{evaluation map} $s_a \colon R[[t_1, \dots, t_m]] \rightarrow R$ which sends $F(t_1, \dots, t_m)$ to $F(a).$ As the following classic result shows, this local ring epimorphism can be extended to any finite integral extension of $R[[t_1, \dots, t_m]]$. 

\begin{lemma}[Going Up Theorem]
\label{GUT}
Let $A \subseteq B$ be a finite integral extension of rings, let $P$ be an integral domain and let $ \varphi \colon A \rightarrow P$ be a ring epimorphism. There exists a finite integral extension $Q$ of $P$ such that $\varphi$ extends to an epimorphism $\tilde{\varphi} \colon B \rightarrow Q.$
 \end{lemma}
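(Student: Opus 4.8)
\medskip

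The plan is to reduce the statement to the classical Lying Over theorem of Cohen--Seidenberg. First I would put $\p = \ker\varphi$; since $\varphi$ is surjective and $P$ is an integral domain, $\p$ is a prime ideal of $A$ and $\varphi$ induces an isomorphism $\bar\varphi\colon A/\p \xrightarrow{\ \sim\ } P$. Because $B$ is integral over $A$, Lying Over provides a prime ideal $\q$ of $B$ with $\q \cap A = \p$. If one prefers a self-contained argument, this is the only nontrivial input, and it is standard: localise at the multiplicative set $A \setminus \p$, note that $A_\p \subseteq (A\setminus\p)^{-1}B$ is still an integral extension and that the right-hand ring is nonzero, pick a maximal ideal $\m$ of it, observe that $\m \cap A_\p$ is maximal (contraction of maximal ideals along integral extensions), hence equals $\p A_\p$, and finally contract $\m$ back to $B$.

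Next I would take $Q = B/\q$. This is an integral domain, the quotient map $\tilde\varphi \colon B \to Q$ is surjective, and the inclusion $A \hookrightarrow B$ descends to an injection $A/\p \hookrightarrow B/\q$. Identifying $A/\p$ with $P$ via $\bar\varphi$, we regard $Q$ as a ring extension of $P$; under this identification $\tilde\varphi|_A$ is exactly the composite $A \twoheadrightarrow A/\p \xrightarrow{\ \bar\varphi\ } P$, which is $\varphi$. So $\tilde\varphi$ extends $\varphi$. It then remains to check that $Q$ is a finite integral extension of $P$: integrality passes to quotients, since any element of $B/\q$ lifts to some $b \in B$ satisfying a monic polynomial over $A$, and reducing that polynomial modulo $\p$ exhibits integrality over $A/\p \cong P$; and since $B$ is finitely generated as an $A$-module, the images of a module-generating set generate $Q$ over $A/\p \cong P$.

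There is no genuine obstacle here beyond invoking Lying Over; the only points meriting a word of care are purely bookkeeping — verifying that the quotient map $\tilde\varphi$ really does restrict to $\varphi$ on $A$ under the identification $A/\p \cong P$, and that $Q = B/\q$ is a domain, both of which are immediate once $\q$ is chosen prime and lying over $\p$.
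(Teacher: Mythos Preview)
Your proof is correct and follows essentially the same approach as the paper: set $\p = \ker\varphi$, invoke Lying Over to obtain a prime $\q \lhd B$ with $\q \cap A = \p$, take $Q = B/\q$, and identify $A/\p$ with $P$ via $\bar\varphi$. The paper simply cites Zariski--Samuel for Lying Over and for the fact that $B/\q$ is a finite integral extension of $A/\p$, whereas you spell out both points explicitly; beyond that, the arguments coincide.
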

 \begin{proof}
 If $\p = \ker{\varphi},$ by \cite[Theorem V.2.3]{ZaSa}, there exists a prime ideal $\q \subseteq B$ such that $\q \cap A = \p.$ Thus, the following diagram is commutative:
$$\begin{tikzcd}
B \arrow[r, "\tilde{\varphi}"]                 & B/\q                                      \\
A \arrow[r, "\varphi"] \arrow[u, hook] & A/\p \arrow[u, "\psi"] 
\end{tikzcd}$$
where $\psi(x  + \p) = x +\q$ is injective. Identifying $A/\mathfrak{p}$ with $P,$ then $\tilde{\varphi}$ extends $\varphi$ and, by \cite[Lemma V.2.1]{ZaSa}, $B/\q$ is a finite integral extension of $A/\p.$ 
 \end{proof}
 \begin{remark*}
In the preceding proof, if $B$ is a pro-$p$ domain, $Q$, as a quotient of a pro-$p$ domain by a prime ideal, is itself a pro-$p$ domain; and, when $\varphi \colon A \rightarrow P$ is a continuous map between pro-$p$ domains, then $A/\p$ is isomorphic to $P$ as a topological ring.
 \end{remark*}
 
 \begin{proof}[Proof of the Main Theorem]
 
Let $G$ be a compact $R$-analytic group. By Lemma \ref{normstd}, there exists an open normal $R$-standard subgroup $(S, \phi),$ of level $N,$ such that, for each $g \in G$ there exists a tuple of power series $\C_g \in R[[X_1, \dots, X_d]]^{(d)}$ satisfying
$$\phi(x^g) = \mathbf{C}_g(\phi(x)) \ \forall x \in S.$$

Let $w$ be a word in $k$ variables such that $w\{G\}$ is finite. If $n = |G:S|$, then $w^n\{ G \} \subseteq S$ and, by Lemma \ref{abelian}, if $w^n(G) $ is finite, then $w(G)$ is finite. Therefore, without loss of generality assume that $w\{G\} \subseteq S.$ 
 
Furthermore, let $P$ be  $\Z_p$ if $\charac{R} = 0$ or $\F_p[[t]]$ if $\charac{R} =p,$ and $\m$ its maximal ideal. According to Cohen's Structure Theorem (loc. cit.) $R$ is a finite integral extension of $P[[t_1, \dots, t_{m}]],$ where $m =\Kdim(R) -1$.  For each $a \in \m^{(m)},$ let $s_a$ be the evaluation epimorphism $s_a \colon P[[t_1, \dots, t_{m}]] \rightarrow P.$ By Lemma \ref{GUT}, $s_a$ extends to a continuous ring epimorphism $\tilde{s}_a \colon R \rightarrow Q,$ where $Q$ is a pro-$p$ domain of Krull dimension $1.$

Let $\FL$ be the formal group law of $S.$ Fix $a \in \m^{(m)}$ and denote by $\FL_a$ the formal group law $\FL_{\tilde{s}_a}$ (cf. Corollary \ref{FGL}). Let $T$ be a left transversal for $S$ in $G,$ and  assume that $1 \in T$. According to Lemma \ref{Qazpitalde}, there exists a $Q$-standard group $L,$ whose group operation is given by $\FL_a,$ and a compact $Q$-analytic group $H,$ which is an overgroup of $L$ and whose group operation is defined as in (\ref{operation}). Moreover, the $Q$-analytic structure of $H$ is given by the atlas $\{ (tL, \psi_t) \}_{t \in T},$ where $\psi_t(t,l) =l$.

Fix $(t_{1}, \dots, t_{k}) \in T^{(k)}$ and assume, by  (\ref{wordmap}), that for any $l \in \N,$ the word map $w^{l}$ is given in $t_{1} S \times \dots \times t_{k} S$ by the power series $\W^{l},$ that is, 
$$\phi\left(w^{l}(x_1, \dots , x_k)\right) = \W^{l}\left(\phi_{t_{1}}(x_{1}), \dots, \phi_{t_{k}}(x_{k}) \right) \ \forall x_{j} \in t_{j}S$$
(even though in order to lighten the notation it is not written explicitly, the power series $\W^l$ also depends on $t_1, \dots, t_k$).

Let $w^l \colon H^{(k)} \rightarrow H$ be the word map $w^l$ with the operation of $H$ induced by $\FL_a,$ as in (\ref{operation}). Thus, by (\ref{genword}),
$$\psi_1\left(w^l(x_1, \dots, x_k)\right) = \W^{l}_{a}\left(\psi_{t_{1}}(x_{1}), \dots, \psi_{t_{k}}(x_{k})\right) \ \forall x_{j} \in t_{j}L.$$
(for any power series $\W,$ use $\W_a$ to denote $\W_{\widetilde{s}_a}$).\\

Therefore, according to Corollary \ref{marginal}, if $w\{G\}$ is finite, then $w$ is constant on each $t_{1} S \times \dots \times t_{k}S.$ Hence, the word map $w,$ with the operation of $H,$ is constant on each $t_{1}L \times \dots \times t_{k}L,$ so $|w\{H\}| \leq |H:L|^{k}$ is finite. According to Proposition \ref{PID}, every word is concise in $H$, so there exists $\ell_a \in \N$ such that $w^{\ell_a}(H) = \{ 1 \},$ and, in particular, 
\begin{equation}
\label{la}
\W_{a}^{\ell_a}(X_1, \dots,X_{dk}) = \0.
\end{equation}
Define the following partition of the space $\m^{(m)}$: 
$$\m_\ell =  \left\{ a \in \m^{(m)} \middle\vert \W_a^{\ell} = \0 \right\}, \ \ell \in \N.$$

Since $\m^{(m)} = \bigcup_{\ell \in \N} \overline{\m_\ell}$ and $\m^{(m)}$ is complete, by the Baire Category Theorem there exists $\ell'$ such that $\overline{\m_{\ell'}}$ has non-empty interior. Moreover, by Corollary \ref{marginal},$\W^{\ell'}$ is constant, i.e.,
$$\W^{\ell'}(X_1, \dots, X_{dk}) = (c_1, \dots, c_d) \in R^{(d)}.$$
Then, if $a \in \m_{\ell'},$
$$\left(\tilde{s}_a(c_1), \dots, \tilde{s}_a(c_d) \right)  = \W_{a}^{\ell'}(X_1, \dots, X_{dk}) = \0.$$
Consequently, $\tilde{s_a}(c_i) = 0$ for all $a \in \m_{\ell'}.$ 

\begin{subclaim*}
Let  $V \subseteq_o \m^{(m)}$ and $D$ a dense subset of $V,$ then $\bigcap_{a \in D} \ker{\tilde{s}_a} = \{0\}.$ 
\end{subclaim*}
\begin{proof}[Proof of the subclaim]
Let $r \in P[[t_1, \dots, t_m]],$ since the evaluation of a power series is continuous, if $r(a) = 0$ for all $a \in D,$ then $r(a) = 0$ for all $a \in V.$ Hence, by \cite[Lemma 9]{Ja}, we have $\bigcap_{a \in D} \ker{s_a}= \{0\}.$  If $\p_a =\ker{s_a}$ and  $\q_a = \ker{\tilde{s}_a},$ then $\p_a = \q_a \cap P[[t_1, \dots, t_m]].$ Hence,
$$P[[t_1, \dots, t_m]] \cap \bigcap_{a \in D} \q_a = \bigcap_{a \in D} \p_a = \{ 0\},$$
and, since the extension is integral, according to \cite[Complement 1 to Theorem V.2.3]{ZaSa}, we have $\bigcap_{a \in D} \q_a = \{ 0\}.$
\end{proof}

By the subclaim, applied to the interior of $\overline{\m_{\ell'}}$ as $V$ and $\m_{\ell'} \cap V$ as $D,$  we have that $c_i = 0$ for all $i \in \{1, \dots, d \}.$ Therefore, repeating the process for all the tuples in $T^{(k)}$ there exists an integer $\ell$ such that 
$\phi\left(w^\ell(x_1, \dots, x_k)\right) = \0$ for all $x_i \in G$, and thus $w^\ell(G) = \{1 \}.$
\end{proof}

\begin{remark*}
It is known that uniform pro-$p$ groups are torsion-free (cf. \cite[Theorem 4.5]{DDMS}), and since every word is concise in them, whenever $H$ is a compact $p$-adic analytic group such that $w\{H\}$ is finite, and $L$ is an open uniform pro-$p$ subgroup, we have that $w\{H\}^{|H:L|} =\{1\}.$ Using this fact, the proof of the Main Theorem can be slightly simplified when $R$ is a pro-$p$ domain of characteristic zero. Indeed, in (\ref{la}), we can take $\ell_a = |G:S|,$ independently on the evaluation $\tilde{s}_a$, and thus avoid the use of Baire's Category Theorem --here we follow the notation in the proof, that is, $G$ is a compact $R$-analytic group and $S$ is the open normal $R$-standard subgroup constructed as in Lemma \ref{normstd}--. Note that, in passing, it would prove that every element of $w\{G\}$ has order dividing $|G:S|$.
\end{remark*}

\section*{Acknowledgements}
The author would like to thank his thesis advisors, Gustavo A. Fern\'andez-Alcober and Jon Gonz\'alez-S\'anchez, for their helpful suggestions and detailed feedback on earlier versions of this paper.

\end{document}